\providecommand{\U}[1]{\protect\rule{.1in}{.1in}}
\newtheorem{theorem}{Theorem}
\newtheorem{definition}[theorem]{Definition}
\newtheorem{lemma}[theorem]{Lemma}
\newtheorem{proposition}[theorem]{Proposition}
\newtheorem{remark}[theorem]{Remark}
\newenvironment{proof}[1][Proof]{\noindent\textbf{#1.} }{\ \rule{0.5em}{0.5em}}
\begin{document}

\title{Global bifurcation of traveling waves in discrete nonlinear Schr\"{o}dinger equations}
\author{Carlos Garc\'{\i}a-Azpeitia{\small \thanks{Departamento de Matem\'{a}ticas,
Facultad de Ciencias, Universidad Nacional Aut\'{o}noma de M\'{e}xico, 04510
Ciudad de M\'{e}xico, M\'{e}xico. E-mail: cgazpe@ciencias.unam.mx}}}
\maketitle

\begin{abstract}
The discrete nonlinear Schr\"{o}dinger equations of $n$ sites are studied with
periodic boundary conditions. These equations have $n$ branches of standing
waves that bifurcate from zero. Traveling waves appear as a symmetry-breaking
from the standing waves for different amplitudes. The bifurcation is proved
using the global Rabinowitz alternative in subspaces of symmetric functions.
Applications to the Schr\"{o}dinger and Saturable lattices are presented.

\emph{Keywords:} NLS-like equations; Periodic solutions; Symmetries,
Equivariant bifurcation; Degree theory

\emph{MSC:} 34C25; 37G40; 47H11; 35Q55

\end{abstract}

\section{Introduction}

The discrete nonlinear Schr\"{o}dinger equation (DNLS) appears in the study of
optical waveguide arrays and Bose--Einstein condensates trapped in optical
lattices \cite{Kr}. In this paper, we consider a general lattice of $n$ sites
described by the equations%
\begin{equation}
i\dot{q}_{j}=V^{\prime}(\left\vert q_{j}\right\vert ^{2})q_{j}+\left(
q_{j+1}-q_{j}\right)  +\left(  q_{j-1}-q_{j}\right)  \text{,} \label{Eq}%
\end{equation}
where the sites, $q_{j}(t)\in\mathbb{C}$ for $j=1,...,n$, satisfy the periodic
boundary conditions $q_{j}=q_{j+n}$. The equations include the Schr\"{o}dinger
lattice, $V(x)=cx^{2}/2$, and the Saturable lattice, $V(x)=c\ln(1+x)$, where
$c$ represents the strength of the linear coupling after rescaling.

Equations (\ref{Eq}) have the explicit solutions
\begin{equation}
q_{j}(t)=ae^{i\omega t}e^{ijm\zeta}\text{ with }\zeta=\frac{2\pi}{n}\text{,}
\label{SW}%
\end{equation}
for $m=1,...,n$, where the frequency $\omega$ is a function of the amplitude
$a\in\mathbb{R}^{+}$ given in (\ref{omega}). These solutions are relative
equilibria and are known as standing waves in the sense that their norms are
stationary in time.

We prove existence of periodic solutions using a non-abelian group that acts
by permutating the oscillators and shifting and reflecting phase and time (see
Definition \ref{Def}). In fact, solutions (\ref{SW}) appear from
symmetry-breaking of the trivial solution; from them, we prove a secondary
symmetry-breaking of periodic solutions (Theorem \ref{Thm1}).

\emph{Main result.} Let $m\in\lbrack0,n/2]\cap\mathbb{N}$ with $m\neq n/4$,
for each $k\in\lbrack1,n/2]\cap\mathbb{N}$ such that
\begin{equation}
\phi_{k}(a)\in(-\infty,1)\backslash\{\gamma_{j}:j\in\lbrack1,n/2]\cap
\mathbb{N}\}\text{,} \label{Con}%
\end{equation}
the relative equilibrium (\ref{SW}) has two global bifurcations of solutions
of the form%
\begin{equation}
q_{j}(t)=e^{i\omega t}e^{ijm\zeta}\left(  a+x\left(  \nu t\pm jk\zeta\right)
\right)  \text{,} \label{PS}%
\end{equation}
where
\[
\phi_{k}(a)=\frac{a^{2}V^{\prime\prime}(a^{2})}{2\cos m\zeta\sin^{2}%
\frac{k\zeta}{2}}\qquad\text{and }\qquad\gamma_{k}=1-\cot^{2}\frac{k\zeta}%
{2}\tan^{2}m\zeta\text{.}%
\]
Each branch is a global continuum in the space of $2\pi$-periodic functions
$x$ and frequencies $\nu$ emanating from $(0,\nu_{k}^{\pm})$ given in
(\ref{nuk}).

These solutions are traveling waves in the sense that their norms satisfy
\[
\left\vert q_{j}\right\vert (t)=a+r(\nu t\pm jk\zeta)\text{,}%
\]
where $r(t)$ is real $2\pi$-periodic; these solutions are known as traveling
or moving breathers when they are localized. The existence of localized
traveling waves in infinite lattices is proved in \cite{PeRo} (see also
Chapter 16 and 21 in \cite{Kr} and \cite{Ro10}).

In Theorem \ref{Thm2} we prove that solutions (\ref{SW}) are stable if the
conditions (\ref{Con}) hold for $k=1,...,n-1$. In \cite{Kr}, solutions given
by (\ref{SW}) are called plane waves, the nonlinear dispersion relation\ (6.7)
is equivalent to (\ref{omega}) and the modulation stability (6.8) has stable
directions precisely when (\ref{Con}) holds.

In \cite{GaIz11}, the authors find a bifurcation of relative equilibria for
$m=1$ and amplitudes $\phi_{k}(a)=\gamma_{k}$. This bifurcation exists due to
an eigenvalue of the linearization that crosses zero. This phenomena occurs
for any $m$, and we should expect a bifurcation of standing waves appearing
from the amplitudes $\phi_{k}(a)=\gamma_{k}$.

For $\phi_{k}(a)\geq1$, there are two eigenvalues colliding on the imaginary
axis and detaching into the complex plane. This phenomenon may trigger a
Hamiltonian-Hopf bifurcation where isolated traveling waves persist for
$\phi_{k}(a)>1$. Indeed, a bifurcation of this kind is described in
\cite{Jo04} for the trimer $n=3$.

The authors prove in \cite{GaIz13,GaIz12,GaIz13-2} bifurcation of periodic
solutions for bodies, vortices and Schr\"{o}dinger sites, for $m=1$. In the
body and vortex problems, the coupling is homogeneous and invariant under all
permutations, i.e. the stability and bifurcation properties of (\ref{SW}) are
independent of $a$ and $m$; but this is not the case for Schr\"{o}dinger
sites. We complete the analysis of bifurcation and stability for all $m$'s.

References \cite{GaIz13,GaIz12,GaIz13-2} use a global Lyapunov-Schmidt
reduction and a topological degree for $G$-equivariant maps that are
orthogonal to the generators (see \cite{BaKr10,DaGe05,IzVi03}). In this paper
we present a direct and self-contained approach, requiring non-abelian group
actions and the global Rabinowitz alternative \cite{Ra}.

In Section 2, we define the equivariant properties of the bifurcation
operator, we present a reduction to a finite number of Fourier components, and
we find the spectra. In Section 3.1 we analyze the spectra for each
irreducible representation and we prove the bifurcation result. In Section 3.2
we study the stability. In Section 4, we apply the results to the focusing and
defocusing Schr\"{o}dinger and Saturable lattices.

\section{Setting the problem}

Equations (\ref{Eq}) in rotating coordinates, $q_{j}(t)=e^{i\omega t}u_{j}%
(t)$, are%
\begin{equation}
i\dot{u}_{j}-\omega u_{j}=V^{\prime}(\left\vert u_{j}\right\vert ^{2}%
)u_{j}+\left(  u_{j+1}-u_{j}\right)  +\left(  u_{j-1}-u_{j}\right)  \text{.}
\label{ec}%
\end{equation}
The values $a_{j}=ae^{ijm\zeta}$ satisfy $a_{j}=a_{j+n}$ and%
\[
\left(  a_{j+1}-2a_{j}+a_{j-1}\right)  =(-4\sin^{2}m\zeta/2)a_{j}\text{.}%
\]
Then $u_{j}(t)=a_{j}$ is an equilibrium and (\ref{SW}) is a solution of
(\ref{Eq}), when%
\begin{equation}
\omega=4\sin^{2}m\zeta/2-V^{\prime}(a^{2})\text{.} \label{omega}%
\end{equation}

In real coordinates, $u_{j}\in\mathbb{R}^{2}$, equations (\ref{ec}) are
\[
J\dot{u}_{j}=\omega u_{j}+V^{\prime}(\left\vert u_{j}\right\vert ^{2}%
)u_{j}+(u_{j+1}-2u_{j}+u_{j-1})\text{,}%
\]
where $J$ is the symplectic matrix%
\begin{equation}
J=\left(
\begin{array}
[c]{cc}%
0 & -1\\
1 & 0
\end{array}
\right)  \text{ and }R=\left(
\begin{array}
[c]{cc}%
1 & 0\\
0 & -1
\end{array}
\right)  \text{.} \label{R}%
\end{equation}

Let $u=(u_{1},...,u_{n})$ and $\mathcal{J}=diag(J,...,J)$. The system of
equations in vectorial form is
\[
\mathcal{J}\dot{u}=\nabla H(u),
\]
where $H$ is the Hamiltonian
\begin{equation}
H=\frac{1}{2}\sum_{j=1}^{n}\left\{  V(\left\vert u_{j}\right\vert ^{2}%
)+\omega\left\vert u_{j}\right\vert ^{2}-\left\vert u_{j+1}-u_{j}\right\vert
^{2}\right\}  \text{.} \label{Heq}%
\end{equation}
In this setting, the relative equilibrium is
\begin{equation}
\mathbf{a}_{m}=(a_{1},...,a_{n})\text{ with }a_{j}=ae^{jm\zeta J}e_{1}\text{.}
\label{Equil}%
\end{equation}

Using the change of variables $u(t)=\mathbf{a}_{m}+x(\nu t)$, $2\pi/\nu
$-periodic solutions of the Hamiltonian system correspond to zeros of the
operator
\[
f(x;\nu)=\mathcal{J}\dot{x}-\nu^{-1}\nabla H(\mathbf{a}_{m}+x):H_{2\pi}%
^{1}(\mathbb{R}^{2n})\times\mathbb{R}^{+}\rightarrow L_{2\pi}^{2}%
(\mathbb{R}^{2n})\text{.}%
\]
Below we will prove global bifurcation of periodic solutions from the set of
trivial solutions $(0,\nu)$ for $\nu\in\mathbb{R}^{+}$.

\begin{definition}
\label{Def}Let $m$ be as in (\ref{Equil}). The linear action of the
non-abelian group%
\[
G=(\mathbb{Z}_{n}\times S^{1})\cup\kappa(\mathbb{Z}_{n}\times S^{1})
\]
in $L_{2\pi}^{2}(\mathbb{R}^{2n})$ is given by the homomorphism of groups
$\rho:G\rightarrow GL(L_{2\pi}^{2})$ generated by
\begin{align*}
\rho(\zeta,\varphi)x_{j}(t)  &  =e^{-m\zeta J}x_{j+1}(t+\varphi)\text{,}\\
\rho(\kappa)x_{j}(t)  &  =Rx_{n-j}(-t)\text{,}%
\end{align*}
where $\zeta=2\pi/n$ generates $\mathbb{Z}_{n}$ in $S^{1}=[0,2\pi)$ and
$\kappa$ is the reflection that generates two copies of $\mathbb{Z}_{n}\times
S^{1}$ in $G$.
\end{definition}

Since the Hamiltonian $H$ is gauge invariant and autonomous, the map $f$ is
$G$-equivariant. Moreover, all the elements of $G$ leave the equilibrium
$\mathbf{a}_{m}$ fixed, and then the isotropy group of $\mathbf{a}_{m}$ is
$G_{\mathbf{a}_{m}}=G$.

\subsection{Irreducible representations}

In this section we find the irreducible representations of the action of $G$.
We define the isomorphisms $T_{k}:\mathbb{C}^{2}\rightarrow V_{k}$ as%
\begin{equation}
T_{k}z=n^{-1/2}(e^{(ikI+mJ)\zeta}z,...,e^{n(ikI+mJ)\zeta}z)\in\mathbb{C}%
^{2n}\text{,}%
\end{equation}
where $V_{k}$ is the image of $T_{k}$ for $k\in\{1,...,n\}$.

A function $x(t)\in L_{2\pi}^{2}(\mathbb{R}^{2n})$ is expressed in the Fourier
components as $x=\sum_{l\in\mathbb{Z}}x_{l}e^{ilt}$, and each Fourier
component $x_{l}\in\mathbb{C}^{2n}$ as $x_{l}=\sum_{k=1}^{n}T_{k}x_{k,l}$ with
$x_{k,l}\in\mathbb{C}^{2}$. Therefore, a function can be expressed in the
orthonormal coordinates $T_{k}x_{k,l}e^{ilt}$ as
\begin{equation}
x(t)=\sum_{(k,l)\in\mathbb{Z}_{n}\times\mathbb{Z}}T_{k}x_{k,l}e^{ilt}%
\text{,\qquad}\mathbb{Z}_{n}=\{1,...,n\}\text{.}%
\end{equation}

Let us denote the $j$-th component of $T_{k}x_{k,l}\in\mathbb{C}^{2n}$ by the
two-dimensional vector $n^{-1/2}e^{j(ikI+mJ)\zeta}x_{k,l}\in\mathbb{C}^{2}$.
With this notation, the $j$-th component of $\rho(\zeta)T_{k}x_{k,l}$ is%
\[
n^{-1/2}e^{-mJ\zeta}e^{(j+1)(ikI+mJ)\zeta}x_{k,l}=n^{-1/2}e^{j(ikI+mJ)\zeta
}(e^{ik\zeta}x_{k,l})\text{,}%
\]
then
\[
\rho(\zeta)T_{k}x_{k,l}=e^{ik\zeta}T_{k}x_{k,l}\text{.}%
\]
Similarly,
\[
\rho(\kappa)T_{k}x_{k,l}=T_{k}R\bar{x}_{k,l}\text{.}%
\]

Therefore, the subspaces of similar irreducible representations of the space
$L_{2\pi}^{2}(\mathbb{R}^{2n})$ by the action of $G$ are
\[
V_{k,l}=\{T_{k}x_{k,l}e^{ilt}:x_{k,l}\in\mathbb{C}^{2}\}.
\]
In the components $x_{k,l}$, the action of $G$ is
\begin{equation}
\rho(\zeta)x_{k,l}=e^{ik\zeta}x_{k,l}\text{,\qquad}\rho(\varphi)x_{k,l}%
=e^{il\varphi}x_{k,l}\text{,\qquad}\rho(\kappa)x_{k,l}=R\bar{x}_{k,l}\text{.}%
\end{equation}

\subsection{Linearization}

Since $V_{k,l}$ are the subspaces of similar irreducible representations of
$G$ and
\[
f^{\prime}(0)=\mathcal{J}\partial_{t}-\nu^{-1}D^{2}H(\mathbf{a}_{m})
\]
is $G$-equivariant, by Schur's lemma, the linearization $f^{\prime}(0)$ is
block diagonal in the components $V_{k,l}$. The diagonal decomposition can be
obtained explicitly using the following proposition,
\[
f^{\prime}(0)x=\sum_{(k,l)\in\mathbb{Z}_{n}\times\mathbb{Z}}T_{k}%
(ilJ-B_{k})x_{k,l}e^{ilt}.
\]

\begin{proposition}
Let $\alpha_{k}$\ and $\beta_{k}$\ be%
\begin{equation}
\alpha_{k}=4\cos m\zeta\sin^{2}k\zeta/2\text{,\qquad}\beta_{k}=2\sin
m\zeta\sin k\zeta\text{,}%
\end{equation}
the Hessian $D^{2}H(\mathbf{a}_{m})$ satisfy that%
\[
D^{2}H(\mathbf{a}_{m})T_{k}z=T_{k}B_{k}z\text{,}%
\]
where $B_{k}$ is the $2\times2$ matrix%
\begin{equation}
B_{k}=\emph{diag}(2a^{2}V^{\prime\prime}(a^{2})-\alpha_{k},-\alpha
_{k})+iJ\beta_{k}\text{.}%
\end{equation}

\end{proposition}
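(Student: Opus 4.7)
The plan is a direct block-diagonalization of the Hessian: compute $D^{2}H$ from (\ref{Heq}) in the natural splitting indexed by sites $j$, apply it to a vector $v = T_{k}z$, and factor the result as $T_{k}(B_{k}z)$. Differentiating (\ref{Heq}) twice in the site variables gives three pieces: (a) the on-site nonlinearity contributes $V'(|u_{j}|^{2})I + 2V''(|u_{j}|^{2}) u_{j}u_{j}^{\top}$ on the $j$-th diagonal block; (b) the frequency term contributes $\omega I$ on the diagonal; (c) the coupling contributes a block-tridiagonal discrete Laplacian with $-2I$ on the diagonal and $I$ on the off-diagonals. At $u_{j} = a_{j}$, the dispersion relation (\ref{omega}) collapses the scalar part $V'(a^{2}) + \omega$ of (a)+(b) to $4\sin^{2}(m\zeta/2)$; only the rank-one piece $2V''(a^{2})\, a_{j}a_{j}^{\top}$ of the nonlinearity remains.

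Next I evaluate on $T_{k}z$, whose $j$-th entry is $v_{j} = n^{-1/2}e^{ikj\zeta}e^{jm\zeta J}z$. The Hessian is a real symmetric bilinear form extended bilinearly to complex arguments, so $a_{j}^{\top}v_{j}$ is the ordinary transpose pairing; the rotations in $a_{j} = a e^{jm\zeta J}e_{1}$ and $v_{j}$ cancel via $(e^{jm\zeta J})^{\top}e^{jm\zeta J} = I$, yielding $a_{j}^{\top}v_{j} = a n^{-1/2}e^{ikj\zeta}z_{1}$. Hence $2V''(a^{2})(a_{j}^{\top}v_{j})a_{j}$ equals $n^{-1/2}e^{ikj\zeta}e^{jm\zeta J}\bigl(2 a^{2}V''(a^{2})\,\mathrm{diag}(1,0)\,z\bigr)$, contributing $2a^{2}V''(a^{2})\,\mathrm{diag}(1,0)$ to $B_{k}$. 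For the discrete Laplacian, $v_{j\pm 1} = n^{-1/2}e^{ikj\zeta}e^{jm\zeta J}(e^{\pm ik\zeta}e^{\pm m\zeta J})z$ since $e^{\pm m\zeta J}$ commutes with $e^{jm\zeta J}$ and $e^{\pm ik\zeta}$ is a scalar; this contributes $e^{ik\zeta}e^{m\zeta J} + e^{-ik\zeta}e^{-m\zeta J} - 2I$ to $B_{k}$.

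Finally, expanding $e^{\pm m\zeta J} = \cos(m\zeta)I \pm \sin(m\zeta)J$ and $e^{\pm ik\zeta} = \cos(k\zeta) \pm i\sin(k\zeta)$ turns the Laplacian contribution into $[2\cos(k\zeta)\cos(m\zeta) - 2]I + 2i\sin(k\zeta)\sin(m\zeta)J$. Adding the scalar $4\sin^{2}(m\zeta/2) = 2 - 2\cos(m\zeta)$ collected in the first step, the coefficient of $I$ collapses to $2\cos(m\zeta)[\cos(k\zeta) - 1] = -4\cos(m\zeta)\sin^{2}(k\zeta/2) = -\alpha_{k}$, while the coefficient of $J$ is $i\beta_{k}$. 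Combining this with the rank-one piece gives $B_{k} = \mathrm{diag}(2a^{2}V''(a^{2}) - \alpha_{k},\, -\alpha_{k}) + i\beta_{k}J$, as stated. The calculation is essentially mechanical and I foresee no substantial obstacle; the only point of care is to treat $D^{2}H$ as a real bilinear form extended \emph{bilinearly} (not sesquilinearly) to $\mathbb{C}^{2n}$, so that inner products are transpose pairings and the rotation factors cancel without producing spurious complex conjugates.
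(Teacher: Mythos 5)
Your proposal is correct and follows essentially the same route as the paper: compute the $2\times2$ blocks of $D^{2}H(\mathbf{a}_{m})$, use the dispersion relation to reduce $V'(a^{2})+\omega$ to $2-2\cos m\zeta$, apply the result to $T_{k}z$ so that the rank-one piece becomes $2a^{2}V''(a^{2})\,\mathrm{diag}(1,0)$ and the coupling becomes $e^{(ikI+mJ)\zeta}+e^{-(ikI+mJ)\zeta}-2I$, and expand. The only cosmetic difference is that you keep the $-2I$ with the discrete Laplacian while the paper absorbs it into $A_{j,j}$, and you make explicit the (correct) bilinear, non-sesquilinear extension of the Hessian that the paper leaves implicit.
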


\begin{proof}
We express $D^{2}H(\mathbf{a}_{m})$ in $2\times2$ blocks $A_{i,j}$ as
\[
D^{2}H(\mathbf{a}_{m})=(A_{i,j})_{i,j=1}^{n}\text{.}%
\]
Since the coupling in the DNLS equations happens only between adjacent sites,
then $A_{i,j}=I$ for $\left\vert i-j\right\vert =1$ and $A_{i,j}=0$ for
$\left\vert i-j\right\vert >1$, modulus $n$.

Using $a_{j}=ae^{jm\zeta J}e_{1}$, we have
\[
\frac{1}{2}D^{2}V(\mathbf{a}_{m})=\ V^{\prime}(a^{2})I+2a^{2}V^{\prime\prime
}(a^{2})e^{(jm\zeta)J}e_{1}e_{1}^{T}e^{-(jm\zeta)J},
\]
where $e_{1}e_{1}^{T}=diag(1,0)$. Since $V^{\prime}(a^{2})+\omega-2=-2\cos
m\zeta$, we conclude%
\[
A_{j,j}=-2(\cos m\zeta)I+2a^{2}V^{\prime\prime}(a^{2})e^{(jm\zeta)J}e_{1}%
e_{1}^{T}e^{-(jm\zeta)J}\text{.}%
\]

Given that the $j$-th component of $T_{k}z\in\mathbb{C}^{2n}$ is
$n^{-1/2}e^{j(ikI+mJ)\zeta}z\in\mathbb{C}^{2}$, where $z\in\mathbb{C}^{2}$,
the $j$-th component of $D^{2}H(a)T_{k}z$ is%
\[
\frac{1}{\sqrt{n}}\left(  A_{j,j}+e^{(ikI+mJ)\zeta}+e^{-(ikI+mJ)\zeta
})\right)  e^{j(ikI+mJ)\zeta}z=\frac{1}{\sqrt{n}}e^{j(ikI+mJ)\zeta}\left(
B_{k}z\right)  \text{,}%
\]
where%
\[
B_{k}=-2(\cos m\zeta)I+2a^{2}V^{\prime\prime}(a^{2})e_{1}e_{1}^{T}%
+e^{(ikI+mJ)\zeta}+e^{-(ikI+mJ)\zeta}\text{.}%
\]

From the equalities
\[
e^{(ikI+mJ)\zeta}+e^{-(ikI+mJ)\zeta}=(2\cos k\zeta\cos m\zeta)I+(2\sin
k\zeta\sin m\zeta)iJ\text{,}%
\]
and
\[
-2\cos m\zeta(1-\cos k\zeta)=-4\cos m\zeta(\sin k\zeta/2)^{2}=-\alpha
_{k}\text{,}%
\]
we conclude that%
\[
B_{k}=-\alpha_{k}I+\beta_{k}(iJ)+2a^{2}V^{\prime\prime}(a^{2})diag(1,0).
\]

\end{proof}

\section{Main Results}

The symmetries permit us to assume, without loss of generality, that
$m\in\lbrack0,n/2]\cap\mathbb{N}$. Throughout this section we also assume
$m\neq n/4$.

In these cases the sign of $\alpha_{k}$ is well defined for $k=1,...,n-1$,%
\[
sgn(\alpha_{k})=\left\{
\begin{array}
[c]{c}%
1\text{ if }m\in\lbrack0,n/4)\text{ }\\
-1\text{ if }m\in(n/4,n/2]
\end{array}
\right.  \text{,}%
\]
and we can define%
\begin{equation}
\phi_{k}(a)=\frac{2a^{2}}{\alpha_{k}}V^{\prime\prime}(a^{2})\text{,\qquad
}\gamma_{k}=1-\left(  \frac{\beta_{k}}{\alpha_{k}}\right)  ^{2}\text{.}%
\end{equation}
Therefore, the matrix $D^{2}H(\mathbf{a}_{m})\ $is block diagonal with blocks
\[
B_{n}=\emph{diag}(2a^{2}V^{\prime\prime}(a^{2}),0)\text{,\qquad}B_{k}%
=\alpha_{k}diag(\phi_{k}-1,-1)+\beta_{k}(iJ)\text{,}%
\]
for $k=1,..,n-1$.

\subsection{Bifurcation theorem}

We consider bifurcation in the fixed point space of the isotropy group
$\tilde{D}_{n}$ generated by $\left(  \zeta,-k\zeta\right)  $ and $\kappa$.
Solutions in the fixed point space of $\tilde{D}_{n}$ have symmetries%
\begin{equation}
x_{j}(t)=\rho(\zeta,-k\zeta)x_{j}(t)=e^{-m\zeta J}x_{j+1}(t-k\zeta)\text{.}
\label{sym}%
\end{equation}
That is $x_{j}(t)=e^{jm\zeta J}x_{n}(t+jk\zeta)$ and, by the action of
$\kappa$,%
\[
x_{n}(t)=\rho(\kappa)x_{n}=Rx_{n}(-t)\text{.}%
\]

The component $x_{k,1}\in\mathbb{C}^{2}$ is fixed by $\kappa\in\tilde{D}_{n}$
if $x_{k,1}\in\mathbb{R}\times i\mathbb{R}$.

\begin{lemma}
\label{Prop}The matrix $iJB_{k}$ in the subspace $\mathbb{R}\times
i\mathbb{R}$ has eigenvalues
\begin{equation}
\nu_{k}^{\pm}=\beta_{k}\pm\sqrt{\alpha_{k}^{2}(1-\phi_{k})}\text{.}
\label{nuk}%
\end{equation}
Moreover,

\begin{description}
\item[(a)] If $k\in\lbrack1,n-1]\cap\mathbb{N}$ and $\phi_{k}(a)\in
(-\infty,\gamma_{k})$, then $\nu_{k}^{+}$ is positive.

\item[(b)] If $k\in\lbrack1,n/2)\cap\mathbb{N}$ and $\phi_{k}(a)\in(\gamma
_{k},1)$, then $\nu_{k}^{+}$ and $\nu_{k}^{-}$ are positive.
\end{description}
\end{lemma}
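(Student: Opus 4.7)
The plan is to reduce the computation to a concrete $2\times 2$ real matrix on $\mathbb{R}\times i\mathbb{R}$ and then read off the signs of its eigenvalues from elementary inequalities involving $\phi_k$ and $\gamma_k$.

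First I would compute $iJB_k$ using the formula for $B_k$ from the preceding proposition. Since $(iJ)^2=I$, the $\beta_k(iJ)$ term contributes $\beta_k I$, and the remaining piece $\alpha_k(iJ)\,\text{diag}(\phi_k-1,-1)$ is the only non-scalar contribution. Next, using the basis $\{e_1,\,ie_2\}$ of the real subspace $\mathbb{R}\times i\mathbb{R}$, I would verify that $iJB_k$ preserves this subspace and write down its matrix there, obtaining something of the form
\[
\begin{pmatrix}\beta_k & -\alpha_k\\ \alpha_k(\phi_k-1) & \beta_k\end{pmatrix}.
\]
The eigenvalue formula $\nu_k^{\pm}=\beta_k\pm\sqrt{\alpha_k^2(1-\phi_k)}$ then follows immediately from trace $2\beta_k$ and determinant $\beta_k^2-\alpha_k^2(1-\phi_k)$ via the quadratic formula.

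For part (a), the hypothesis $\phi_k<\gamma_k=1-(\beta_k/\alpha_k)^2$ is equivalent to $\alpha_k^2(1-\phi_k)>\beta_k^2$, so the square root strictly exceeds $|\beta_k|$ and hence $\nu_k^+=\beta_k+\sqrt{\alpha_k^2(1-\phi_k)}>\beta_k+|\beta_k|\geq 0$. For part (b), the hypothesis $\gamma_k<\phi_k<1$ gives the reverse inequality $0<\alpha_k^2(1-\phi_k)<\beta_k^2$, so both eigenvalues are real and lie in the interval $(\beta_k-|\beta_k|,\beta_k+|\beta_k|)$; positivity of both therefore reduces to checking $\beta_k>0$.

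The only subtle point is this last sign check. Since $\beta_k=2\sin m\zeta\sin k\zeta$, I would use the restriction $k\in[1,n/2)\cap\mathbb{N}$ to deduce $k\zeta\in(0,\pi)$ and hence $\sin k\zeta>0$, and note that $m=0$ or $m=n/2$ forces $\beta_k=0$ and makes $\gamma_k=1$, rendering case (b) vacuous; the remaining values $m\in(0,n/2)\setminus\{n/4\}$ give $m\zeta\in(0,\pi)$ and hence $\sin m\zeta>0$, so $\beta_k>0$. This compatibility of the hypotheses is the step that requires the most care, but it is straightforward once the hypothesis list of the lemma is examined; the rest is linear algebra.
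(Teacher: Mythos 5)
Your proposal is correct and follows essentially the same route as the paper: the paper also conjugates by $L=\mathrm{diag}(1,i)$ to obtain exactly the real matrix $\left(\begin{smallmatrix}\beta_k & -\alpha_k\\ \alpha_k(\phi_k-1) & \beta_k\end{smallmatrix}\right)$, reads off the eigenvalues from the characteristic polynomial $d_k(\nu)=(\nu-\beta_k)^2-\alpha_k^2(1-\phi_k)$, and concludes (a) from $d_k(0)<0$ and (b) from the sign of $\beta_k$. Your extra observation that $m=0$ or $m=n/2$ forces $\beta_k=0$ and $\gamma_k=1$, making case (b) vacuous, is a small but genuine refinement, since the paper simply asserts $\beta_k>0$ without excluding those values of $m$.
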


\begin{proof}
Let $L=\mathrm{diag}(1,i)$. The eigenvalues of $iJB_{k}$ in the subspace
$\mathbb{R}\times i\mathbb{R}$ are the eigenvalues of the real matrix%
\[
L^{-1}(iJB_{k})L=\left(
\begin{array}
[c]{cc}%
\beta_{k} & -\alpha_{k}\\
\alpha_{k}\left(  \phi_{k}-1\right)  & \beta_{k}%
\end{array}
\right)  \text{.}%
\]
The eigenvalues $\nu$ of this matrix are the zeros of
\begin{equation}
d_{k}(\nu)=(\nu-\beta_{k})^{2}-\alpha_{k}^{2}(1-\phi_{k})\text{.} \label{det}%
\end{equation}

The function $d_{k}(0)$ has two real solutions if and only if $\phi_{k}%
\in(-\infty,1)$. Since $d_{k}(\nu)$ is a polynomial of order $\nu^{2}$ at
infinity, then $\nu_{k}^{+}$ is positive and $\nu_{k}^{-}$ is negative if
$d_{k}(0)$ is negative. This is the case if $\phi_{k}<1-(\beta_{k}/\alpha
_{k})^{2}=\gamma_{k}$. For $\phi\in(\gamma_{k},1)$, the function $d_{k}(0)$ is
positive, and the values $\nu_{k}^{\pm}$ have the same sign of $\beta_{k}$. We
conclude this result from the fact that $\beta_{k}>0$ for $k\in\lbrack
1,n/2)\cap\mathbb{N}$.
\end{proof}

\begin{definition}
We say that the amplitude $a$ is non-degenerate if $V^{\prime\prime}%
(a^{2})\neq0$, $\phi_{k}(a)\neq\gamma_{k}$ for $k=1,...,n-1$. We say that the
frequency $\nu_{k}^{\pm}$ for $k=1,...,n-1$ is non-resonant if $\nu_{j}^{\pm
}\neq l\nu_{k}^{\pm}$ for integers $l\geq1$ and $j\neq k$.
\end{definition}

The matrix $B_{n}$ restricted to the subspace $x_{n,0}=\rho(\kappa
)x_{n,0}=Rx_{n,0}$ has the simple eigenvalue $2a^{2}V^{\prime\prime}(a^{2})$.
For $k=1,...,n$, the blocks $B_{k}$ have determinants $\beta_{k}^{2}%
-\alpha_{k}^{2}(1-\phi_{k})$. Therefore, the non-degeneracy property of $a$
assures that $D^{2}H(\mathbf{a}_{m})$ has no zero-eigenvalues in
$\mathrm{Fix}(\tilde{D}_{n})$.

\begin{theorem}
\label{Thm1}Assume that $a$ is non-degenerate. If $\nu_{k}^{\pm}$ is
non-resonant, then

\begin{description}
\item[(a)] For each $k\in\lbrack1,n-1]\cap\mathbb{N}$ such that $\phi
_{k}(a)\in(-\infty,\gamma_{k})$, there is a global bifurcation from
$(0,\nu_{k}^{+})$ in the space
\begin{equation}
\{x\in H_{2\pi}^{2}:x_{j}(t)=e^{jm\zeta J}x_{n}(t+jk\zeta),x_{n}%
(t)=Rx_{n}(-t)\}\times\mathbb{R}^{+} \label{space}%
\end{equation}

\item[(b)] For $k\in\lbrack1,n/2]\cap\mathbb{N}$ such that $\phi_{k}%
(a)\in(\gamma_{k},1)$, there are global bifurcations from $(0,\nu_{k}^{+})$
and $(0,\nu_{k}^{-})$ in the space given by (\ref{space}).
\end{description}
\end{theorem}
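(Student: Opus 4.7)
The plan is to apply the global Rabinowitz bifurcation theorem to the $\tilde{D}_{n}$-invariant restriction of the zero-finding operator $f(\cdot\,;\nu)$. Because $f$ is $G$-equivariant and $\tilde{D}_{n}\subset G$, it preserves the fixed-point subspace $X:=\mathrm{Fix}(\tilde{D}_{n})\subset H^{1}_{2\pi}(\mathbb{R}^{2n})$, so it suffices to seek nontrivial zeros of $f\vert_{X}$. I would recast $f(x;\nu)=0$ as a Leray-Schauder equation $x=K(x;\nu)$ on $X$ by splitting $\nu^{-1}\nabla H(\mathbf{a}_{m}+x)$ into its linear piece $\nu^{-1}D^{2}H(\mathbf{a}_{m})x$ and a higher-order remainder, then inverting the Fredholm operator $\mathcal{J}\partial_{t}+\mathrm{Id}$, whose inverse is compact from $L^{2}_{2\pi}$ into $H^{1}_{2\pi}$. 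This puts the equation in the functional-analytic setting needed for the Rabinowitz alternative and confines the search to the space (\ref{space}).

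The second step is to make $f'(0)\vert_{X}$ block-diagonal. Requiring $\rho(\zeta,-k\zeta)x=x$ on a Fourier summand $T_{j}x_{j,l}e^{ilt}$ forces $e^{i(j-lk)\zeta}=1$, i.e.\ $j\equiv lk\pmod{n}$; the involution $\kappa$ then pins the surviving coefficient to the real two-plane $\mathbb{R}\times i\mathbb{R}\subset\mathbb{C}^{2}$. Hence $X$ decomposes as a Hilbert sum of two-dimensional real blocks indexed by $l\in\mathbb{Z}$, on which $f'(0)$ acts as $ilJ-\nu^{-1}B_{lk}$; by Lemma \ref{Prop} this block is singular exactly when $l\nu\in\{\nu_{lk}^{\pm}\}$. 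The non-degeneracy of $a$ makes the $l=0$ blocks on $X$ invertible, so only the modes with $l\neq 0$ can contribute bifurcation points.

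The crux is the crossing-number computation at $\nu=\nu_{k}^{\pm}$. Non-resonance isolates the $l=\pm 1$ block as the only source of singularity in a neighborhood; on its $2\times 2$ real restriction the determinant $d_{k}(\nu)$ of (\ref{det}) vanishes simply and changes sign, yielding a one-dimensional kernel and a $\pm 1$ jump in the Leray-Schauder parity of $\mathrm{Id}-K(\cdot\,;\nu)$. Because $B_{j}$ takes only finitely many values, the high-$|l|$ blocks are automatically invertible by dominance of $ilJ$, so the parity count collapses to a single $2\times 2$ determinant sign change, and the global Rabinowitz alternative delivers an unbounded continuum (or one meeting another bifurcation point) of nontrivial solutions in $X\times\mathbb{R}^{+}$ emanating from $(0,\nu_{k}^{\pm})$. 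In case (b), Lemma \ref{Prop} gives $\nu_{k}^{\pm}>0$ simultaneously, so the argument applies separately at each frequency, producing the two advertised branches.

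The principal obstacle is the rigorous reduction of the Leray-Schauder parity change to that single scalar sign computation, since the underlying linearization $\mathcal{J}\partial_{t}-\nu^{-1}D^{2}H(\mathbf{a}_{m})$ is strongly indefinite. One must combine the non-degeneracy hypothesis (to eliminate $l=0$ kernels in $X$), the non-resonance hypothesis (to exclude simultaneous crossings from Fourier modes with $|l|\geq 2$), and the finite block set $\{B_{j}\}_{j=1}^{n}$ (to make the high-$|l|$ tail invertible) in order to argue that the parity of $\mathrm{Id}-K(\cdot\,;\nu)$ flips by exactly one at $\nu=\nu_{k}^{\pm}$. Once this reduction is justified, the classical parity version of the Rabinowitz continuation theorem closes the proof.
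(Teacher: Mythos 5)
Your proposal is correct and follows essentially the same route as the paper: restrict to $\mathrm{Fix}(\tilde{D}_{n})$, invert the leading operator to obtain a compact perturbation of the identity, block-diagonalize over Fourier modes so that mode $l$ carries the spatial block $B_{lk}$, use non-degeneracy ($l=0$), non-resonance ($l\geq2$) and the dominance of $il\mathcal{J}$ (large $|l|$) to isolate a simple crossing at $l=1$, $\nu=\nu_{k}^{\pm}$, and invoke the Rabinowitz alternative. The only cosmetic differences are your choice of $(\mathcal{J}\partial_{t}+\mathrm{Id})^{-1}$ in place of the paper's mode-wise inverse $K$, and your phrasing of the crossing as a determinant sign change of $d_{k}(\nu)$ rather than a simple eigenvalue of $T(\nu)$ crossing $1$ (Theorem 3.4.1 of \cite{Ni2001}); these are equivalent.
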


\begin{proof}
Let $K:L_{2\pi}^{2}\rightarrow H_{2\pi}^{1}$ be the operator defined in the
Fourier basis $x=\sum_{l\in\mathbb{Z}}x_{l}e^{ilt}$ as%
\[
Kx=x_{0}+\sum_{l\in\mathbb{Z}\backslash\{0\}}(li\mathcal{J})^{-1}x_{l}%
e^{ilt}\text{.}%
\]
Since $K:H_{2\pi}^{1}\rightarrow H_{2\pi}^{1}$ is compact and%
\begin{equation}
f(x)=\mathcal{J}\dot{x}-\nu^{-1}D^{2}H(\mathbf{a}_{m})x+\mathcal{O(}\left\vert
x\right\vert ^{2})\text{,}%
\end{equation}
then%
\begin{equation}
Kf(x)=x-T(\nu)x+g(x):H_{2\pi}^{1}\rightarrow H_{2\pi}^{1}\text{,}%
\end{equation}
where $T(\nu)$ is the compact linear operator%
\[
Tx=(I+\nu^{-1}D^{2}H(\mathbf{a}_{m}))x_{0}+\sum_{l\in\mathbb{Z}\backslash
\{0\}}(\nu l)^{-1}i\mathcal{J}D^{2}H(\mathbf{a}_{m})x_{l}e^{ilt}\text{,}%
\]
and $g(x)=\mathcal{O(}\left\vert x\right\vert _{H_{2\pi}^{1}}^{2})$ is a
nonlinear compact operator.

Since $f$ and $K$ are $G$-equivariant, the operator $Kf$ is $G$-equivariant.
Then $Kf$ is well defined in the space $\mathrm{Fix}(\tilde{D}_{n})$ given in
(\ref{space}). The global bifurcation follows from Theorem 3.4.1 in
\cite{Ni2001} if $T(\nu)$ has a simple eigenvalue crossing $1$ in
$\mathrm{Fix}(\tilde{D}_{n})$.

For $l=0$, by the non-degeneracy property of $a$, the matrix $I+\nu^{-1}%
D^{2}H(\mathbf{a}_{m})$ has no eigenvalues equal to $1$ in $\mathrm{Fix}%
(\tilde{D}_{n})$. For $l\geq2$, due to the non-resonance property of $\nu
_{k}^{\pm}$, the matrix $(l\nu_{k}^{\pm})^{-1}i\mathcal{J}D^{2}H(\mathbf{a}%
_{m})$ has no eigenvalues equal to $1$. For $l=1$, we have that $\nu
^{-1}i\mathcal{J}D^{2}H(\mathbf{a}_{m})$ has an eigenvalue crossing $1$ when
$\nu$ crosses $\nu_{k}^{\pm}$, corresponding to the block $iJB_{k}$. Moreover,
this eigenvalues is simple when $iJB_{k}$ is restricted to $\mathbb{R}\times
i\mathbb{R}$. We conclude that $T(\nu)$ in $\mathrm{Fix}(\tilde{D}_{n})$ has a
simple eigenvalue crossing $1$ when $\nu$ crosses $\nu_{k}^{\pm}$.
\end{proof}

\begin{remark}
In the case of $1:l$ resonances, $l\nu_{k}^{\pm}=\nu_{j}^{\pm}$ for $l\geq2$,
the previous theorem gives the existence of the bifurcation with the biggest
frequency $\nu_{j}^{\pm}$. In the case of $1:1$ resonances, $\nu_{k}^{+}%
=\nu_{k}^{-}$, we cannot prove existence of bifurcation because there is a
double eigenvalue of $T(\nu)$ crossing $1$. In this case, a Hamiltonian-Hopf
bifurcation may appear at $\phi_{k}(a)=1$. This is described in Theorem 11.5.1
of \cite{MeHa91}, where two isolated solutions persist for $\phi_{k}(a)>1$.
\end{remark}

\subsection{Stability Analysis}

Let%
\begin{equation}
\sigma_{m}=\left\{
\begin{array}
[c]{c}%
sgn(V^{\prime\prime}(a^{2}))\text{ if }m\in\lbrack1,n/4)\\
-sgn(V^{\prime\prime}(a^{2}))\text{ if }m\in(n/4,n/2]
\end{array}
\right.  \text{.}%
\end{equation}

\begin{theorem}
\label{Thm2}If $\sigma_{m}<0$, or $\sigma_{m}>0$ and $\phi_{1}(a)<1$, then the
relative equilibrium (\ref{SW}) is linearly stable.
\end{theorem}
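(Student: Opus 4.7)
The plan is to block-diagonalize the linear Hamiltonian vector field at $\mathbf{a}_m$ through the isomorphisms $T_k$ of Section~2, to reduce linear stability to a spectral condition on each $2\times 2$ block, and then to use the monotonicity of $\sin^2(k\zeta/2)$ to see that $\phi_1$ is the extremal case.

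First I would compute the action of $\mathcal{J}$ on each $V_k$. Because $J$ commutes with both the scalar $e^{ik\zeta I}$ and with $e^{mJ\zeta}$, one has $\mathcal{J}T_k z=T_k(Jz)$. Combined with the Proposition this gives, on each $V_k$,
\begin{equation*}
-\mathcal{J}\,D^2H(\mathbf{a}_m)\,T_k z \;=\; -T_k J B_k z,
\end{equation*}
so the linearization $\dot x=-\mathcal{J}D^2H(\mathbf{a}_m)x$ decouples into independent $2\times 2$ systems generated by $-JB_k$. Linear stability of the relative equilibrium then amounts to each $-JB_k$ having purely imaginary, diagonalizable spectrum, except that the persistent generalized kernel contributed by $B_n$ is the standard $S^1$ gauge direction of the Hamiltonian and is harmless in the usual Lyapunov-modulo-symmetry sense for relative equilibria.

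Next I would read off the spectrum from Lemma~\ref{Prop}. Since eigenvalues of $-JB_k$ differ from those of $iJB_k$ by a factor of $i$, and since the polynomial $d_k(\nu)=(\nu-\beta_k)^2-\alpha_k^2(1-\phi_k(a))$ is in fact the full characteristic polynomial of $iJB_k$ on $\mathbb{C}^2$, the spectrum of $-JB_k$ equals $\{\,i\nu_k^+,\,i\nu_k^-\,\}$. These eigenvalues are purely imaginary precisely when $\nu_k^\pm$ are real, i.e.\ when $\phi_k(a)\le 1$, and they are distinct (giving diagonalizability) as soon as $\phi_k(a)<1$ strictly, using $\alpha_k\neq 0$ which holds for $m\neq n/4$ and $k\neq n$. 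Therefore linear stability reduces to the uniform bound $\phi_k(a)<1$ for $k=1,\ldots,n-1$.

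Finally I would turn this uniform bound into the single hypothesis on $\phi_1(a)$ stated in the theorem. The identity
\begin{equation*}
\phi_k(a) \;=\; \phi_1(a)\,\frac{\sin^2(\zeta/2)}{\sin^2(k\zeta/2)}, \qquad k=1,\ldots,n-1,
\end{equation*}
together with $\sin^2(k\zeta/2)\ge\sin^2(\zeta/2)$ over $k\zeta/2\in[\pi/n,(n-1)\pi/n]$, shows that $\phi_k(a)$ shares the sign of $\phi_1(a)$ (which is $\sigma_m$) and satisfies $|\phi_k(a)|\le|\phi_1(a)|$. Hence $\sigma_m<0$ forces every $\phi_k(a)<0<1$, while $\sigma_m>0$ combined with $\phi_1(a)<1$ yields $0<\phi_k(a)\le\phi_1(a)<1$; in both situations the block criterion is met and $\mathbf{a}_m$ is linearly stable.

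The main subtlety I expect is the careful interpretation of the $B_n$ block: verifying that its double zero eigenvalue is a genuine gauge mode, inherited from the residual $S^1$ symmetry preserved by the rotating frame, so that it does not obstruct stability; and confirming that no other block exhibits nontrivial Jordan structure on the imaginary axis, which under the strict inequalities above it cannot.
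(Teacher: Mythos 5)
Your proposal is correct and follows essentially the same route as the paper: block-diagonalize $\mathcal{J}D^2H(\mathbf{a}_m)$ via the $T_k$, read off the eigenvalues $i\nu_k^{\pm}$ from Lemma~\ref{Prop}, dismiss the zero block $B_n$ as the gauge mode, and reduce everything to $\phi_k(a)<1$ for $k=1,\dots,n-1$ using the monotonicity of $\sin^2(k\zeta/2)$ (the paper phrases this via $\phi_k$ decreasing on $k\le n/2$ together with $\nu_{n-k}^{\pm}=-\nu_k^{\pm}$, while you bound all $k$ at once with $\sin^2(k\zeta/2)\ge\sin^2(\zeta/2)$ — a cosmetic difference). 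Your explicit remarks on semisimplicity of the nonzero imaginary eigenvalues and on the nilpotent structure of the $B_n$ block are slightly more careful than the paper's, but the argument is the same.
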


\begin{proof}
The Hamiltonian equation (\ref{Heq}) is linearly stable at the equilibrium
$\mathbf{a}_{m}$ if $\mathcal{J}D^{2}H(\mathbf{a}_{m})$ has only pairs of
purely conjugated imaginary eigenvalues $\pm i\nu$. Since $\mathcal{J}%
D^{2}H(\mathbf{a}_{m})$ has a pair of zero eigenvalues that corresponds to the
gauge symmetry, the system is linearly stable if $\mathcal{J}D^{2}%
H(\mathbf{a}_{m})$ has $n-1$ pairs of purely imaginary eigenvalues.

The sign of $\phi_{k}$ does not depend on $k\in\{1,...,n-1\}$ and is equal to
$\sigma_{m}$. For $\sigma_{m}$ negative, we have $\phi_{k}(a)<1$. For
$\sigma_{m}$ positive, using the fact that $\cos k\zeta$ is increasing for
$k\in\lbrack1,n/2]\cap\mathbb{N}$, then $\phi_{k}$ is decreasing for
$k=1,...,n/2$ and $\phi_{k}(a)<\phi_{1}(a)<1$. In both cases, it is not
difficult to see that $\mathcal{J}D^{2}H(\mathbf{a}_{m})$ has eigenvalues
$i\nu_{k}^{\pm}$ for $k=1,...,n-1$. Since $\nu_{n-k}^{\pm}=-\nu_{k}^{\pm}$, we
conclude that $\mathcal{J}D^{2}H(\mathbf{a}_{m})$ has $n-1$ pairs of purely
imaginary eigenvalues.
\end{proof}

\section{Applications}

In complex coordinates,%
\[
q_{j}(t)=e^{i\omega t}e^{ijm\zeta}\left(  a+x_{n}(\nu t+jk\zeta)\right)
\text{.}%
\]
These solutions are discrete traveling waves in the sense that the norms
satisfy%
\[
\left\vert q_{j}\right\vert (t)=a+\left\vert x_{n}\right\vert (\nu
t+jk\zeta).
\]
For example, if $k$ divides $n$, the traveling wave has $k$ identical waves
with wavelength equal to $n/k$ sites.

In the following sections, we present applications to the Schr\"{o}dinger and
Saturable lattice.

\subsection{Schr\"{o}dinger lattice}

The cubic Schr\"{o}dinger potential is $V(x)=cx^{2}/2$, where $c>0$
corresponds to the focusing case, and $c<0$ to the defocusing case. Standing
waves given by (\ref{SW}) exist for
\[
\omega=4\sin^{2}m\zeta/2-ca^{2}\text{.}%
\]

In the focusing case, the potential satisfies%
\[
V^{\prime\prime}(a^{2})=c>0\text{ and }\sigma=1\text{.}%
\]
If $m\in\lbrack0,n/4)$, then $\alpha_{k}>0$ and $\phi_{k}(a)<1$ for
$a^{2}<\alpha_{k}/2c$. If $m\in(n/4,n/2]$, then $\alpha_{k}<0$ and $\phi
_{k}(a)<1$ for all $a$.

Therefore, the following result holds.

\begin{proposition}
In the focusing Schr\"{o}dinger lattice ($c>0$), the equilibrium
$\mathbf{a}_{m}$ is linearly stable when $m\in(n/4,n/2]$, or $m\in
\lbrack0,n/4)$ and $a<\sqrt{\alpha_{1}/2c}$. Moreover, the equilibrium
$\mathbf{a}_{m}$ has two global bifurcations of traveling waves\ for each
$k\in\lbrack1,n/2]\cap\mathbb{N}$ if $m\in(n/4,n/2]$, or $m\in\lbrack0,n/4)$
and $a<\sqrt{\alpha_{k}/2c}$.
\end{proposition}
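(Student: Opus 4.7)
The plan is to specialize Theorems \ref{Thm1} and \ref{Thm2} to the cubic potential $V(x) = cx^2/2$, so the whole argument reduces to bookkeeping once the two key quantities $\sigma_m$ and $\phi_k(a)$ are evaluated. First I would record that $V'(x) = cx$ and $V''(x) \equiv c$, so in the focusing regime $V''(a^2) = c > 0$ for every amplitude. This immediately yields $\sigma_m = +1$ for $m \in [0,n/4)$ and $\sigma_m = -1$ for $m \in (n/4, n/2]$, and reduces the dimensionless parameter to $\phi_k(a) = 2ca^2/\alpha_k$, a strictly monotone function of $a^2$ whose sign coincides with that of $\alpha_k$.

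For the linear stability claim, the two cases of the proposition map directly onto the two cases of Theorem \ref{Thm2}. When $m \in (n/4, n/2]$, one has $\sigma_m < 0$ and stability follows unconditionally. When $m \in [0, n/4)$, one has $\sigma_m > 0$, so stability requires $\phi_1(a) < 1$; since $\alpha_1 = 4\cos(m\zeta)\sin^2(\zeta/2) > 0$ in this range, this is equivalent to $a^2 < \alpha_1/(2c)$, matching the stated bound $a < \sqrt{\alpha_1/(2c)}$.

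For the bifurcation statement, I would verify the hypothesis $\phi_k(a) \in (-\infty, 1)$ of Theorem \ref{Thm1} in each case. When $m \in (n/4, n/2]$, one has $\alpha_k < 0$ for every $k \in [1, n-1]$, so $\phi_k(a) < 0 < 1$ automatically for every $a > 0$. When $m \in [0, n/4)$, $\alpha_k > 0$ and $\phi_k(a) < 1$ is equivalent to $a < \sqrt{\alpha_k/(2c)}$. In either case Theorem \ref{Thm1} produces the two global continua of traveling waves, corresponding to the two drift senses $\pm jk\zeta$ in (\ref{PS}).

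The only point that is not purely mechanical, and therefore the main (if mild) obstacle, is ensuring compatibility with the exclusions built into Theorem \ref{Thm1}: the degenerate amplitudes $\phi_k(a) = \gamma_j$ and the resonant amplitudes $\nu_j^\pm = l\nu_k^\pm$. Strict monotonicity of $\phi_k$ in $a^2$ implies both sets are countable and nowhere dense in the stated amplitude ranges, so the bifurcation conclusion holds off a discrete exceptional set, exactly the exclusion already implicit in the phrasing of the main result.
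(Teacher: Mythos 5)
Your proposal is correct and follows essentially the same route as the paper: the paper's own justification is precisely the computation $V''(a^2)=c>0$, the determination of the sign of $\alpha_k$ from the sign of $\cos m\zeta$, and the observation that $\phi_k(a)=2ca^2/\alpha_k<1$ either automatically (when $\alpha_k<0$) or exactly when $a^2<\alpha_k/2c$, after which Theorems \ref{Thm1} and \ref{Thm2} are invoked. Your closing remark about excluding the degenerate and resonant amplitudes is a point the paper leaves implicit, so it is a welcome (and correct) extra precision rather than a deviation.
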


In the defocusing case, the potential satisfies
\[
V^{\prime\prime}(a^{2})=c<0\text{ and }\sigma=-1\text{.}%
\]
For $m\in\lbrack0,n/4)$, $\phi_{k}(a)<1$ . For $m\in(n/4,n/2]$, if
$a^{2}<\alpha_{k}/2c$, then $\phi_{k}(a)<1$ .

\begin{proposition}
In the defocusing Schr\"{o}dinger lattice ($c<0$), the equilibrium
$\mathbf{a}_{m}$ is linearly stable when $m\in\lbrack0,n/4)$, or
$m\in(n/4,n/2]$ and $a<\sqrt{\alpha_{1}/2c}$. Moreover, the equilibrium
$\mathbf{a}_{m}$ has two global bifurcations of traveling waves for each
$k\in\lbrack1,n/2]\cap\mathbb{N}$ if $m\in\lbrack0,n/4)$, or $m\in(n/4,n/2]$
and $a<\sqrt{\alpha_{k}/2c}$.
\end{proposition}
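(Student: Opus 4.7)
The plan is to specialize Theorems \ref{Thm1} and \ref{Thm2} to the defocusing cubic potential $V(x)=cx^{2}/2$ with $c<0$, mirroring the analysis just carried out in the focusing case. The only substantive work is sign bookkeeping. First I would compute $V''(a^{2})=c<0$, so that
\[
\phi_{k}(a)=\frac{2a^{2}V''(a^{2})}{\alpha_{k}}=\frac{2a^{2}c}{\alpha_{k}},
\]
and read off the sign of $\alpha_{k}$ from Section 3: $\alpha_{k}>0$ for $m\in[0,n/4)$ and $\alpha_{k}<0$ for $m\in(n/4,n/2]$. Combined with $c<0$, this yields $\sigma_{m}=-1$ in the first range and $\sigma_{m}=+1$ in the second.

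Second, I would translate the key inequality $\phi_{k}(a)<1$ into an explicit bound on $a$. For $m\in[0,n/4)$, $\phi_{k}(a)=2a^{2}c/\alpha_{k}<0<1$ holds automatically for every $a>0$. For $m\in(n/4,n/2]$, multiplying $2a^{2}c/\alpha_{k}<1$ by the negative quantity $\alpha_{k}$ reverses it to $2a^{2}c>\alpha_{k}$, equivalently $a^{2}<\alpha_{k}/(2c)$, which is a genuinely positive bound since both $\alpha_{k}$ and $c$ are negative.

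Linear stability then follows at once from Theorem \ref{Thm2}: in the first range, $\sigma_{m}<0$ gives stability for every amplitude $a$; in the second range, $\sigma_{m}>0$ forces the extra condition $\phi_{1}(a)<1$, which by the previous paragraph is $a<\sqrt{\alpha_{1}/(2c)}$. The bifurcation claim follows similarly from Theorem \ref{Thm1}, applied at each generic (non-degenerate and non-resonant) amplitude satisfying $\phi_{k}(a)<1$, with the two directions $\pm k\zeta$ in the traveling-wave ansatz (\ref{PS}) accounting for the two global branches per $k$. The only potential pitfall is the sign flip when multiplying by $\alpha_{k}<0$ or $c<0$; once this is handled correctly, the proposition is a direct corollary of the main theorems and requires no new analytic input.
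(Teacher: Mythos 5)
Your proposal is correct and follows essentially the same route as the paper: compute $V''(a^{2})=c<0$, track the sign of $\alpha_{k}$ on the two ranges of $m$, translate $\phi_{k}(a)<1$ into the amplitude bound $a^{2}<\alpha_{k}/(2c)$, and then invoke Theorems \ref{Thm1} and \ref{Thm2}. If anything you are slightly more careful than the paper's shorthand ``$\sigma=-1$'', since you correctly note that $\sigma_{m}$ as defined equals $-1$ for $m\in[0,n/4)$ but $+1$ for $m\in(n/4,n/2]$ in the defocusing case, which is exactly what makes the extra condition $\phi_{1}(a)<1$ appear only in the second range.
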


\subsection{Saturable lattice}

The Saturable potential is given by $V(x)=c\ln(1+x)$ with $c>0$. It is clear
that%
\[
V^{\prime\prime}(a^{2})=-c(1+a^{2})^{-2}\text{ and }\sigma=-1\text{.}%
\]
If $m\in\lbrack0,n/4)$, then $\phi_{k}(a)<1$ for all $a$, while if
$m\in(n/4,n/2]$, then $\phi_{k}(a)<1$ for $\left(  a+a^{-1}\right)
^{-2}<-\alpha_{k}/2c$.

\begin{proposition}
The equilibrium $\mathbf{a}_{m}$ in the Saturable lattice is linearly stable
when $m\in\lbrack0,n/4)$, or $m\in(n/4,n/2]$ and $\left(  a+a^{-1}\right)
^{-1}<\sqrt{-\alpha_{1}/2c}$. Moreover, the equilibrium $\mathbf{a}_{m}$ has
two global bifurcations of traveling waves for each $k\in\lbrack
1,n/2]\cap\mathbb{N}$ if $m\in\lbrack0,n/4)$, or $m\in(n/4,n/2]$ and%
\[
\left(  a+a^{-1}\right)  ^{-1}<\sqrt{-\alpha_{k}/2c}\text{.}%
\]

\end{proposition}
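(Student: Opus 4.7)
The plan is to specialize Theorems \ref{Thm1} and \ref{Thm2} to the saturable potential $V(x)=c\ln(1+x)$ with $c>0$. Essentially no new bifurcation analysis is needed; the only real task is to translate the abstract conditions $\sigma_{m}$ and $\phi_{k}(a)<1$ into the explicit inequality on $(a+a^{-1})^{-1}$ that appears in the statement.

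First I would compute $V^{\prime\prime}(a^{2})=-c(1+a^{2})^{-2}<0$, so $\mathrm{sgn}\,V^{\prime\prime}(a^{2})=-1$. By the definition of $\sigma_{m}$ this gives $\sigma_{m}=-1$ when $m\in[0,n/4)$ and $\sigma_{m}=+1$ when $m\in(n/4,n/2]$. Theorem \ref{Thm2} then gives linear stability unconditionally in the first range and conditionally on $\phi_{1}(a)<1$ in the second, which is the dichotomy stated in the proposition.

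Next I would rewrite $\phi_{k}(a)<1$ in closed form. Substituting $V^{\prime\prime}(a^{2})$ into the definition yields
\[
\phi_{k}(a)=\frac{-2ca^{2}}{\alpha_{k}(1+a^{2})^{2}}.
\]
For $m\in[0,n/4)$ we have $\alpha_{k}>0$, so $\phi_{k}(a)<0<1$ holds for every $a$ and no amplitude restriction is needed. For $m\in(n/4,n/2]$ we have $\alpha_{k}<0$, hence $\phi_{k}(a)>0$, and $\phi_{k}(a)<1$ becomes $a^{2}/(1+a^{2})^{2}<-\alpha_{k}/(2c)$. Using the identity $a^{2}/(1+a^{2})^{2}=(a+a^{-1})^{-2}$ for $a>0$, this is exactly $(a+a^{-1})^{-1}<\sqrt{-\alpha_{k}/(2c)}$. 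Taking $k=1$ and inserting into Theorem \ref{Thm2} gives the stability claim; taking general $k$ and inserting into Theorem \ref{Thm1} gives the two global branches of traveling waves.

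The only mild obstacle is verifying the non-degeneracy and non-resonance hypotheses required by Theorem \ref{Thm1}. Since $V^{\prime\prime}(a^{2})=-c(1+a^{2})^{-2}$ never vanishes, and since $\phi_{k}(a)-\gamma_{k}$ and the resonance equations $l\nu_{j}^{\pm}=\nu_{k}^{\pm}$ are nontrivial real-analytic functions of $a$, the degenerate and resonant amplitudes form a nowhere dense set that can be excluded without affecting the global continua. This is the same tacit justification used in the two preceding propositions for the focusing and defocusing Schr\"{o}dinger lattices.
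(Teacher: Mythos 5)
Your proposal is correct and follows essentially the same route as the paper, which simply computes $V''(a^2)=-c(1+a^2)^{-2}$, notes the sign of $\alpha_k$ in the two ranges of $m$, and rewrites $\phi_k(a)<1$ via the identity $a^2/(1+a^2)^2=(a+a^{-1})^{-2}$ before invoking Theorems \ref{Thm1} and \ref{Thm2}. Your added remark on the generic validity of the non-degeneracy and non-resonance hypotheses is a reasonable (and slightly more careful) gloss on what the paper leaves tacit.
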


\textbf{Acknowledgements.} C. Garc\'{\i}a is grateful to P.~Panayotaros, M.
Tejada-Wriedt and the referee and editor for their useful comments which
greatly improved the presentation of this manuscript.


\begin{thebibliography}{99}                                                                                               %


\bibitem {BaKr10}Z. Balanov, W. Krawcewicz, S. Rybicki, and H. Steinlein,
\emph{A short treatise on the equivariant degree theory and its applications,}
J. Fixed Point Theory Appl. 8 (2010), pp. 1--74.

\bibitem {DaGe05}E. Dancer, K. Geba, and S. Rybicki, \emph{Classification of
homotopy classes of equivariant gradient maps,} Fund. Math. 185 (2005), pp. 1-18.

\bibitem {Ro10}M. Feckan and V. Rothos, \emph{Travelling waves of discrete
nonlinear Schrodinger equations with nonlocal interactions, }Applicable
Analysis 89 (2010), pp. 1387-1411.

\bibitem {GaIz13}C.~Garc\'{\i}a-Azpeitia and J.~Ize, \emph{Global bifurcation
of planar and spatial periodic solutions from the polygonal relative
equilibria for the }$n$\emph{-body problem,} J. Differential Equations 254
(2013), pp. 2033--2075.

\bibitem {GaIz13-2}C.~Garc\'{\i}a-Azpeitia and J.~Ize, \emph{Bifurcation of
periodic solutions from a ring configuration of discrete nonlinear
oscillators,} DCDS-S 6 (2013), pp. 975 - 983.

\bibitem {GaIz12}C.~Garc\'{\i}a-Azpeitia and J.~Ize, \emph{Bifurcation of
periodic solutions from a ring configuration in the vortex and filament
problems,} J. Differential Equations 252 (2012), pp. 5662-5678.

\bibitem {GaIz11}C.~Garc\'{\i}a-Azpeitia and J.~Ize, \emph{Global bifurcation
of polygonal relative equilibria for masses, vortices and dNLS oscillators},
J. of Differential Equations 251 (2011), pp. 3202--3227.

\bibitem {IzVi03}J. Ize and A. Vignoli, \emph{Equivariant Degree Theory,} De
Gruyter Series in Nonlinear Analysis and Applications 8, Walter de Gruyter,
Berlin, 2003.

\bibitem {Jo04}M. Johansson, \emph{Hamiltonian {Hopf} bifurcations in the
discrete nonlinear {Schr{\"{o}}dinger} trimer: oscillatory instabilities,
quasiperiodic solutions and a 'new' type of self-trapping transition}, J.
Phys. A: Math. Gen. 37 (2004), pp. 2201--2222.

\bibitem {Kr}P. Kevrekidis, \emph{The discrete nonlinear {Schr{\"{o}}dinger}
equation, }Mathematical Analysis, Numerical Computations and Physical
Perspectives, Springer, 2009.

\bibitem {MA94}R. MacKay and S. Aubry, \emph{Proof of existence of breathers
for time-reversible or hamiltonian networks of weakly coupled oscillators,
}Nonlinearity 7 (1994), pp. 1623--1643.

\bibitem {MaFu}B. A. Malomed, J. Fujioka, A. Espinosa-Cer\'{o}n, R.
Rodr\'{\i}guez, and S. Gonz\'{a}lez, \emph{Moving embedded lattice solitons,}
Chaos 16 (2006), 013112.

\bibitem {MeHa91}K. Meyer and G. Hall, \emph{An Introduction to Hamiltonian
Dynamical Systems,} Springer-Verlag, 1991.

\bibitem {Ni2001}L. Nirenberg, \emph{Topics in Nonlinear Functional Analysis,}
Courant Lecture Notes in Mathematics 6, American Mathematical Society, 2001.

\bibitem {Pan10}P. Panayotaros, \emph{Continuation of normal modes in finite
{NLS} lattices, }Phys. Lett. A 374 (2010), pp. 3912--1919.

\bibitem {PaPe08}P. Panayotaros and D. Pelinovsky, \emph{Periodic oscillations
of discrete NLS solitons in the presence of diffraction management},
Nonlinearity 21 (2008), pp. 1265-1279.

\bibitem {PD09}C. Pando and E. Doedel, \emph{Bifurcation structures and
dominant models near relative equilibria in the one-dimensional discrete
nonlinear {Schr{\"{o}}dinger} equation}, Physica D. 238 (2009), pp. 687--698.

\bibitem {PeRo}D. Pelinovsky and V. Rothos, \emph{Bifurcations of travelling
wave solutions in the discrete NLS equations,} Physica D. 202 (2005), pp. 16--36.

\bibitem {Ra}P. H. Rabinowitz, \emph{Some global results for nonlinear
eigenvalue problems,} J. Funct. Anal. 7 (1971), pp. 487-513.

\bibitem {Va82}A. Vanderbauwhede, \emph{Local bifurcation and symmetry,}
Research notes in mathematics 75, Pitman Advanced Publishing Program, 1982.
\end{thebibliography}
\end{document}